\newtheorem{theorem}{Theorem}[section]
\newtheorem{lemma}[theorem]{Lemma}
\newtheorem{proposition}[theorem]{Proposition}
\newtheorem{question}[theorem]{Question}
\newtheorem{conjecture}[theorem]{Conjecture}
\newtheorem*{namedtheorem}{\theoremname}
\newcommand{\theoremname}{testing}
\theoremstyle{definition}
\theoremstyle{remark}
\newtheorem{remark}[theorem]{Remark}
\newtheorem{example}[theorem]{Example}
\newtheorem*{ack}{Acknowledgements}
\def\R{\mathbb{R}}
\def\Z{\mathbb{Z}}
\def\Q{\mathbb{Q}}
\def\cL{\mathcal{L}}
\def\ref{\textup{ref}}
\def\rk{\textup{rk}}
\def\lk{\operatorname{lk}}
\def\rk{\operatorname{rk}}
\def\d{\partial}
\newcommand{\co}{\mskip0.5mu\colon\thinspace}
\def\varep{\varepsilon}
\author[Jennifer Hom]{Jennifer Hom}
\thanks{The author was partially supported by NSF grants DMS-1128155, DMS-1307879, and a Sloan Research Fellowship.}
\address {School of Mathematics, Georgia Institute of Technology, Atlanta, GA 30332}
\address{School of Mathematics, Institute for Advanced Study, Princeton, NJ 08540}
\email{hom@math.gatech.edu}
\numberwithin{equation}{section}
\title{Satellite knots and L-space surgeries}
\begin{document}
\maketitle

\begin{abstract}
We give sufficient conditions for a satellite knot to admit an L-space surgery, and use this result to give new infinite families of patterns which produce satellite L-space knots.
\end{abstract}

\section{Introduction}

An \emph{L-space} is a rational homology sphere with the simplest possible Heegaard Floer homology, i.e., $\rk \widehat{HF}(Y) = |H_1(Y; \Z)|$. An \emph{L-space knot} is a knot $K \subset S^3$ which admits a positive L-space surgery. Since lens spaces are L-spaces, any knot admitting a positive lens space surgery is an L-space knot. However, there are many L-space knots which do not admit lens space surgeries, e.g., \cite[Proposition 23]{BakerMoore}.

Satellite L-space knots were first studied by Hedden in \cite{HeddencablingII}. Let $K_{p,q}$ denote the $(p,q)$-cable of $K$, where $p$ denotes the longitudinal winding. Without loss of generality, we assume $p>1$. Hedden proved that if $K$ is an L-space knot and $q > p(2g(K)-1)$, then $K_{p,q}$ is an L-space knot; the author proved the converse in \cite{HomLspace}.

The author, Lidman, and Vafaee consider a broader class of patterns in \cite{HLV}, namely \emph{Berge-Gabai knots}, that is, knots $P \subset D^2 \times S^1$ which admit solid torus surgeries. They prove that a satellite knot with a Berge-Gabai pattern is an L-space knot if and only if the companion $K$ is an L-space knot and the Berge-Gabai knot is sufficiently twisted relative to the genus of $K$.

Our main result is a set of sufficient conditions for a satellite knot to be an L-space knot. Let $P(K, n)$ be the $n$-twisted satellite with pattern $P \subset D^2 \times S^1$ and companion $K$; that is, we obtain $P(K, n)$ by taking the union of $D^2 \times S^1$ and the complement of $K$, identifying $\d D^2 \times \{ \textup{pt} \}$ with a meridian of $K$ and $\{ \textup{pt} \} \times S^1$ with an $n$-framed longitude. (Throughout, we assume that $K$ is non-trivial and that $P$ is not contained in a $3$-ball in $D^2 \times S^1$.) At times, we will write $P(K)$ to denote $P(K, 0)$. Let $U$ denote the unknot and $w(P)$ the winding number of $P$. We say $K$ is a \emph{negative L-space knot} if $K$ admits a negative L-space surgery, or equivalently if $-K$ is an L-space knot.

\begin{theorem}\label{thm:sufficient}
The satellite knot $P(K)$ is an L-space knot if the following conditions are satisfied:
\begin{enumerate}
	\item the knot $K$ is an L-space knot,
	\item \label{it:disk} $w(P) \geq 2$ and there exists a meridional disk $D \subset D^2 \times S^1$ which intersects $P$ in exactly $w(P)$ points, 
	\item \label{it:2g} the knot $P(U, -2g)$ is an L-space knot, where $g$ is the genus of $K$,
	\item \label{it:negative} the knot $P(U, -n)$ is a negative L-space knot for all sufficiently large integers $n$.
\end{enumerate}
\end{theorem}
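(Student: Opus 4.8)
The plan is to establish the theorem by showing that a large positive surgery on $P(K)$ is an L-space, which suffices since $P(K)$ is then by definition an L-space knot. Writing $V = D^2 \times S^1$ for the solid torus containing the pattern and $W_P = V \setminus \nu(P)$ for the pattern complement, I would regard $W_P$ as a bordered manifold with two torus boundary components and pair its bimodule with the companion complement $X_K = S^3 \setminus \nu(K)$ along $\partial V = \partial \nu(K)$. For $N \gg 0$ the manifold $S^3_N(P(K))$ is obtained by additionally Dehn filling the $\partial \nu(P)$ boundary at slope $N$, so by the pairing theorem its Heegaard Floer homology is the homology of a box tensor product $\CFA(X_K) \boxtimes \CFDA(W_P) \boxtimes \CFD(N)$, where $\CFD(N)$ is the type-D invariant of the slope-$N$ filling. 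The goal is then to prove that the rank of this homology equals $N = |H_1(S^3_N(P(K)))|$.

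First I would invoke the standard form of the bordered invariant of an L-space knot complement. Because $K$ is an L-space knot of genus $g$, the type-D invariant of $X_K$ is a single ``staircase'' that agrees with the invariant of the unknot complement outside a window of vertical extent $2g$ (equivalently, $\CFKi(K)$ is a staircase supported in Alexander gradings $[-g,g]$). I would decompose the pairing accordingly into a \emph{stable} contribution, coming from the part of the companion invariant that coincides with that of $U$, and a bounded \emph{staircase} contribution localized in the genus window. Condition \eqref{it:disk} enters here: the hypothesis $w(P) \geq 2$ together with the meridional disk meeting $P$ in exactly $w(P)$ points guarantees that $P$ is geometrically essential and monotone with respect to the longitudinal direction, so that the generators contributed by each segment of the staircase add without cancellation and the pairing is computed by a genuine rank count rather than by a spectral sequence with unknown higher differentials.

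The heart of the argument is to identify the two types of local contributions with surgeries on the pattern applied to the unknot. The nearly vertical risers of the staircase of a genus-$g$ positive L-space knot behave, in the pairing, exactly like pairing with the unknot complement at an increasingly negative framing; matching these against knots in $S^3$ forces me to compare with $P(U, -n)$ for $n$ large, and the orientation of the risers relative to the surgery slope is reversed, so the relevant local blocks have minimal rank precisely when $P(U, -n)$ is a \emph{negative} L-space knot---this is condition \eqref{it:negative}. The horizontal treads, and in particular the transition at the top and bottom of the genus window, which is a net shift of $2g$, behave like the pattern applied to the unknot at framing $-2g$, so the corresponding block has minimal rank exactly when $P(U, -2g)$ is an L-space knot---this is condition \eqref{it:2g}. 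Assembling the stable and staircase contributions and summing the ranks should then yield exactly $N$.

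The main obstacle I anticipate is the bookkeeping at the seams between the stable region and the staircase: I must show that no unexpected generators survive where the unknot-like part of the companion invariant meets the staircase risers and treads, i.e.\ that the local computations supplied by conditions \eqref{it:2g} and \eqref{it:negative} glue together with the correct signs and gradings and without leftover homology. Getting the orientation conventions right---so that a positive L-space knot $K$ produces risers demanding the \emph{negative} L-space condition in \eqref{it:negative} while the window edges demand the \emph{positive} condition in \eqref{it:2g}---is the delicate point, and it is exactly what pins down the specific framings $-2g$ and $-n$ appearing in the hypotheses.
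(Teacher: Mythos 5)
Your proposal has a genuine gap at what you yourself call ``the heart of the argument,'' and it is not a gap of bookkeeping but of substance. The hypotheses of the theorem only tell you the L-space status of the knots $P(U,-2g)$ and $P(U,-n)$, i.e., they constrain the result of pairing the pattern bimodule $\CFDA(W_P)$ with solid-torus modules (unknot complements at various framings) and then taking homology. This information does \emph{not} determine the bimodule $\CFDA(W_P)$ itself, and in particular it does not determine how that bimodule pairs with the staircase of a general L-space knot $K$: higher $A_\infty$ operations in $\CFDA(W_P)$ can create differentials in the box tensor product $\CFA(X_K)\boxtimes\CFDA(W_P)\boxtimes\CFD(N)$ that are invisible in every one of the unknot pairings. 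So the claim that the ``risers'' and ``treads'' of the staircase contribute local blocks whose ranks are governed exactly by $P(U,-n)$ and $P(U,-2g)$ is precisely the statement that needs proof, and nothing in your outline supplies a mechanism for it. Your reading of condition \eqref{it:disk} is also off: there is no principle in bordered Floer theory by which a meridional disk meeting $P$ in $w(P)$ points forces the pairing to have no cancellation. In the paper that condition does two concrete jobs: $w\geq 2$ guarantees that the slope $w^2/r$ filling has $H_1 \cong \Z$ (hence is not an L-space), and the disk gives the genus bound $g(\tilde P_a)\leq g(P)+\tfrac{aw(w-1)}{2}$ for the twisted patterns, which feeds into surgery-slope inequalities.

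The paper's actual proof avoids any bordered computation and instead supplies exactly the interpolation your approach lacks, via two black-box theorems. Working with the link $P\cup J$ ($J$ an unknot whose complement is the pattern solid torus), one shows that $S^3_{r,1/a}(P\cup J)$ and $S^3_{r,1/b}(P\cup J)$ are L-spaces (these are large surgeries on $P(U,-a)$ with $a = 2g(K)$ and negative surgeries on $P(U,-b)$, using conditions \eqref{it:2g} and \eqref{it:negative}), while $S^3_{r,w^2/r}(P\cup J)$ is not. The Rasmussen--Rasmussen structure theorem for $\cL(M_r)$ (a closed interval or the complement of the rational longitude) then upgrades these two L-space fillings to the full set of slopes $[-\infty,\tfrac1b]\cup[\tfrac1a,\infty]\subset\cL(M_r)$, and the Hanselman--Rasmussen--Rasmussen--Watson gluing theorem concludes that $S^3_r(P(K)) = M_r\cup_h M_K$ is an L-space because every slope is a strict L-space slope on one side or the other. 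If you want to salvage a bordered-style argument, you would need to prove an analogue of this interpolation for bimodule pairings, which is not available; as written, your proposal assumes the conclusion of its central step.
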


\begin{remark}
An affirmative answer to \cite[Question 1.16]{BakerMotegi} would imply that condition \eqref{it:disk} is implied by condition \eqref{it:negative}.
\end{remark}

\noindent Below are several infinite families of patterns $P$ satisfying the last three conditions of Theorem \ref{thm:sufficient}.

\begin{example}\label{ex:torusknots}
Sufficiently twisted torus knots, with their standard embedding into $D^2 \times S^1$, satisfy conditions \eqref{it:disk}--\eqref{it:negative}. Thus, Theorem \ref{thm:sufficient} implies that if $K$ is an L-space knot and $q \geq 2pg(K) - 1$, then $K_{p,q}$ is an L-space knot. Indeed, if $P$ is the $(p,q)$-torus knot in $D^2 \times S^1$ (where, as above, $p$ indicates the longitudinal winding), then $P(U, -2g(K))$ is the $(p, q-2pg(K))$-torus knot, which is an L-space knot exactly when $q-2pg(K) \geq -1$. By \cite[Theorem 1.10]{HeddencablingII} and \cite{HomLspace}, we have that $K_{p,q}$ is an L-space knot if and only if $K$ is an L-space knot and $q > p(2g(K)-1)$. In particular, this shows that Theorem \ref{thm:sufficient} does not give necessary conditions for a satellite to be an L-space knot.
\end{example}

\begin{example}\label{ex:1bridge}
A knot $P$ in $D^2 \times S^1$ is an \emph{$n$-bridge braid} if it can be isotoped to be a braid in $D^2 \times S^1$ which lies in $\d D^2 \times S^1$ except for $n$ bridges. By \cite{GLV}, we have that if $P$ is a 1-bridge braid in $D^2 \times S^1$, then $P(U)$ admits an L-space surgery. Furthermore, the property of being a 1-bridge braid is preserved under adding (positive or negative) full twists to $P$. It is straightforward to verify that a 1-bridge braid is either a positive or negative braid, e.g., by considering the classification of 1-bridge braids in \cite[Section 2]{Gabai1990}. If a 1-bridge braid $P$ is a positive braid, then $P(U)$ is a L-space knot; if $P$ is a negative braid, then $P(U)$ is a negative L-space knot. Thus, sufficiently positively twisted 1-bridge braids satisfy conditions \eqref{it:disk}--\eqref{it:negative} in Theorem \ref{thm:sufficient}.

Note that by \cite[Theorem 2.6]{Berge}, the set of Berge-Gabai knots forms a proper subset of the set of 1-bridge braids in $D^2 \times S^1$, and there are infinitely many 1-bridge braids with are not Berge-Gabai knots. In particular, sufficiently positive 1-bridge braid patterns give a new infinite family of satellite L-space knot patterns.
\end{example}

\begin{example}
For sufficiently large $n$, the knots $K_{n, 0} \subset S^3 \setminus N(c_a) \cong D^2 \times S^1$ and $K_{0, n} \subset S^3 \setminus N(c_b) \cong D^2 \times S^1$ in \cite[Theorem 8.1]{Motegi} satisfy conditions \eqref{it:disk}--\eqref{it:negative} in Theorem \ref{thm:sufficient}. (These patterns satisfy condition \eqref{it:disk} by the remark following \cite[Question 1.16]{BakerMotegi}.) The knots $K_{n, 0}, K_{0, n} \subset S^3$ are tunnel number two, and thus these patterns are distinct from Examples \ref{ex:torusknots} and \ref{ex:1bridge}.
\end{example}

\begin{example}
For sufficiently large $n$, the twisted torus knots in \cite[Theorem 1.8(1)]{Motegi} (viewed as a knot in the complement of the unknotted circle $c$ in \cite[Definition 5.2]{Motegi}) satisfy conditions \eqref{it:disk}--\eqref{it:negative} in Theorem \ref{thm:sufficient}.
\end{example}

In light of Example \ref{ex:torusknots}, we ask the following natural question:

\begin{question}
Can the conditions on $P$ in Theorem \ref{thm:sufficient} be relaxed to give necessary and sufficient conditions for $P(K)$ to be an L-space knot?
\end{question}

\noindent Regarding necessary conditions for $P(K)$ to be an L-space knot, note that by \cite{OSlens} and \cite{Ni}, L-space knots are fibered, and by \cite{HMS}, if $P(K)$ is fibered, then both $K$ and $P(U)$ are fibered and the winding number of $P$ is nonzero. Hence if $P(K)$ is an L-space knot, then both $K$ and $P(U)$ are fibered and $w(P) \neq 0$. See Section \ref{sec:fr} for further remarks related to necessary conditions for $P(K)$ to be an L-space knot.

We also consider the following question:

\begin{question}[{c.f. \cite[Question 22]{BakerMoore}}]
Do conditions \eqref{it:disk}--\eqref{it:negative} in Theorem \ref{thm:sufficient} imply that $P$ is a braid? A strongly quasipositive braid? A positive braid?
\end{question}

\begin{ack}
I would like to thank Ken Baker, Josh Greene, Tye Lidman, and Fery Vafaee for helpful conversations.
\end{ack}

\section{Proof of Theorem}

We prove Theorem \ref{thm:sufficient} using work of Hanselman, J. Rasmussen, S. Rasmussen, and Watson. Let $M$ be a compact, connected, oriented 3-manifold with torus boundary, and let $M(\alpha)$ denote the result of Dehn filling $M$ along a slope of $\alpha \subset \d M$, where $\alpha$ represents a primitive class in $H_1(\d M; \Z)/ \pm 1$. If we fix a basis for $H_1(\d M; \Z)$, we may identify the set of slopes with $\Q P^1 = \Q \cup \{ \frac{1}{0} \}$, viewed as a subspace of $\R P^1$. Let
\[ \cL(M) = \{ \alpha \mid M(\alpha) \textup{ is an L-space}\}. \]
We will be particularly interested in its interior, $\cL^\circ(M)$, the set of \emph{strict L-space slopes}.

\begin{theorem}[{\cite[Theorem 4]{HRRW}}]\label{thm:HRRW}
Let $M_1$ and $M_2$ be compact, connected, oriented 3-manifolds with torus boundary, and suppose that $Y \cong M_1 \cup_h M_2$ for some homeomorphism $h \co \d M_1 \rightarrow \d M_2$. 
If, for every slope $\alpha \subset \d M_1$, either $\alpha \in \cL^\circ(M_1)$ or $h(\alpha) \in \cL^\circ(M_2)$, then $Y$ is an L-space.
\end{theorem}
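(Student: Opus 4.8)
The plan is to pass from Heegaard Floer homology to the immersed-curve reformulation of the bordered invariant of a manifold with torus boundary, due to Hanselman--Rasmussen--Watson, and to deduce the L-space condition from a purely geometric statement about minimal intersection of curves. First I would attach to each $M_i$ its immersed multicurve invariant $\gamma_i$ (a collection of immersed curves, possibly carrying local systems) in the once-punctured boundary torus $\d M_i \setminus \{z\}$. The pairing theorem then identifies $\rk \widehat{HF}(M_1 \cup_h M_2)$ with the minimal geometric intersection number $\iota(h(\gamma_1), \gamma_2)$, where $h$ transports $\gamma_1$ to $\d M_2$; in particular, Dehn filling along a slope $\alpha$ is the special case in which one factor is a solid torus, whose invariant is the straight line $\ell_\alpha$ of slope $\alpha$, so that $\rk \widehat{HF}(M(\alpha)) = \iota(\gamma, \ell_\alpha)$.

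Next I would record the homological lower bound: for curves in the torus one always has $\iota(h(\gamma_1), \gamma_2) \geq |\langle h_*[\gamma_1], [\gamma_2]\rangle| = |H_1(Y;\Z)|$ whenever $Y$ is a rational homology sphere, with equality exactly when $h(\gamma_1)$ and $\gamma_2$ can be isotoped to have no \emph{excess} intersection points, i.e. to co-bound no bigons. Thus the theorem reduces to two claims: that the hypothesis forces $Y$ to be a rational homology sphere, and that it forces the excess intersection to vanish. The first is immediate: if the essential homology classes of $h(\gamma_1)$ and $\gamma_2$ were parallel, with common essential slope $h(\alpha)$, then $M_1(\alpha)$ would have $b_1 > 0$ and hence $\alpha$ would be a non-L-space slope, so $\alpha \notin \cL^\circ(M_1)$, and likewise $h(\alpha) \notin \cL^\circ(M_2)$, contradicting the hypothesis.

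For the second and central claim I would translate $\cL^\circ(M)$ into the language of the curves. After pulling $\gamma$ tight, one associates to it a closed set of \emph{wrapping slopes} --- those slopes in whose direction some component of $\gamma$ turns back around a puncture --- and the structure theorem for $\cL(M)$ identifies $\cL^\circ(M)$ with the set of slopes at which $\gamma$ does not wrap. The hypothesis then reads: for every slope $\alpha \subset \d M_1$, either $\gamma_1$ does not wrap at $\alpha$ or $\gamma_2$ does not wrap at $h(\alpha)$. The key geometric lemma is that every excess intersection point of two tight curves in the torus can be localized to a slope at which \emph{both} curves wrap; equivalently, if at each slope at most one of the two curves wraps, they are already in minimal position and $\iota = |H_1(Y;\Z)|$. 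I expect this lemma --- producing from an innermost excess bigon a common wrapping slope for $h(\gamma_1)$ and $\gamma_2$ --- to be the main obstacle, since it requires a careful analysis of the local pictures of the two curves along the boundary of the bigon in the annular cover and, in the presence of local systems, an accounting of multiplicities that leaves the tangency argument intact. Granting the lemma, the hypothesis rules out common wrapping slopes, so there is no excess intersection, $\rk \widehat{HF}(Y) = |H_1(Y;\Z)|$, and $Y$ is an L-space.
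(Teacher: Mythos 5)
First, a point of order: the paper contains no proof of Theorem \ref{thm:HRRW}. It is imported verbatim from \cite{HRRW} and used as a black box in the proof of Theorem \ref{thm:sufficient}, so there is no in-paper argument to measure your proposal against; the relevant comparison is with the proof in \cite{HRRW} itself (which is carried out in the algebraic, loop-calculus formulation of bordered Floer homology for loop-type manifolds) or with its later geometric reformulation via immersed curves due to Hanselman, Rasmussen, and Watson, which is the route you chose. Your skeleton matches that immersed-curve proof: the pairing theorem identifying $\rk \HFhat(Y)$ with a (weighted) minimal intersection number, Dehn filling as pairing with a line $\ell_\alpha$, the homological bound $\iota(h(\gamma_1),\gamma_2) \geq |H_1(Y;\Z)|$ with equality exactly when there are no bigons, and the rational-longitude argument showing the hypothesis forces $Y$ to be a rational homology sphere are all correct and are the standard reductions.

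The genuine gap is the step you yourself flag as ``the main obstacle.'' The identification of $\cL^\circ(M)$ with the set of slopes at which the tight representative of $\gamma$ does not wrap, and the lemma that any excess bigon between $h(\gamma_1)$ and $\gamma_2$ can be localized to a slope at which \emph{both} curves wrap, are not technical verifications to be granted: together they \emph{are} the theorem. Everything else in your outline is machinery that exists independently of this result, whereas these two statements require the structure theory of the invariant --- that components can be isotoped into a neighborhood of a union of lines through the puncture, that $[\gamma]$ is parallel to the rational longitude, that the wrapping slopes govern which fillings are L-spaces, and that local systems can be handled so that bigon-freeness still computes the weighted count $\sum \dim V_1 \cdot \dim V_2$ rather than the naive geometric one. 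Deferring this lemma defers the entire mathematical content; as written, the proposal proves the theorem modulo a lemma of essentially the same depth as the theorem itself. You have, however, located the difficulty in exactly the right place, and the surrounding reductions are sound.
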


If $\cL(M)$ is nonempty, then $b_1(M ) = 1$, which implies that $M$ is a rational homology $D^2 \times S^1$. Let $[\lambda]$ denote the rational longitude of $M$, i.e., $\lambda$ is a primitive element of $H_1(\d M; \Z)$ such that $\iota(\lambda) \in H_1(M)$ is torsion.

\begin{theorem}[{\cite[Proposition 1.3 and Theorem 1.6]{RR}}]\label{thm:RR}
If $|\cL(M)| > 1$, then $\cL(M)$ is either $\Q P^1 \setminus \{ [\lambda] \}$ or a closed interval in $\Q P^1$.
\end{theorem}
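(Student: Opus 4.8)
The plan is to study, for each slope $\alpha \neq [\lambda]$, the defect
\[
d(\alpha) = \rk \widehat{HF}(M(\alpha)) - |H_1(M(\alpha);\Z)| \ge 0,
\]
which is well defined because every such $M(\alpha)$ is a rational homology sphere, and which vanishes exactly when $\alpha \in \cL(M)$. First I would record that $[\lambda] \notin \cL(M)$: filling along the rational longitude produces a manifold with $b_1 \ge 1$, so $M([\lambda])$ is not a rational homology sphere and cannot be an L-space. Thus $\cL(M) \subseteq \Q P^1 \setminus \{[\lambda]\}$, and after a change of basis of $H_1(\d M; \Z)$ I may assume $[\lambda] = \tfrac{1}{0}$, so that the remaining slopes are linearly ordered as $\Q \subset \R$ and $|H_1(M(p/q))| = c\,|q|$ for a fixed constant $c > 0$.

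The first ingredient is a subadditivity of $d$ over Farey triangles. If $\{\alpha, \beta, \mu\}$ are pairwise distance one in the Farey graph, with $\mu$ the mediant lying between $\alpha$ and $\beta$ on the arc missing $[\lambda]$, then the surgery exact triangle relating $\widehat{HF}(M(\alpha))$, $\widehat{HF}(M(\beta))$, and $\widehat{HF}(M(\mu))$ gives $\rk \widehat{HF}(M(\mu)) \le \rk \widehat{HF}(M(\alpha)) + \rk \widehat{HF}(M(\beta))$, while the geometric intersection numbers satisfy $\Delta(\mu, [\lambda]) = \Delta(\alpha, [\lambda]) + \Delta(\beta, [\lambda])$, hence $|H_1(M(\mu))| = |H_1(M(\alpha))| + |H_1(M(\beta))|$. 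Subtracting yields $d(\mu) \le d(\alpha) + d(\beta)$. In particular, if $\alpha$ and $\beta$ are L-space slopes then so is their mediant, and iterating over the Stern--Brocot subdivision shows that every slope lying between two Farey-adjacent L-space slopes is itself an L-space slope.

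The main obstacle is that two L-space slopes need not be Farey-adjacent, so the mediant induction above does not by itself reach the slopes of small denominator sitting between two far-apart elements of $\cL(M)$, nor does it control the endpoints of the interval (the closedness asserted in the statement). To handle both at once I would pass to the immersed-curve invariant $\gamma(M) \subset \d M \setminus \{z\}$ of Hanselman--Rasmussen--Watson, for which $\rk \widehat{HF}(M(\alpha))$ equals the minimal geometric intersection number of $\gamma(M)$ with a line of slope $\alpha$. Exactly one component of $\gamma(M)$ is homologically essential, carrying the non-torsion part of $\widehat{HF}(M)$ and tracking the rational longitude; the remaining components are null-homotopic and account for the defect. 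The slope $\alpha$ is then an L-space slope precisely when a line of slope $\alpha$ can be isotoped to meet every component minimally, and the non-L-space slopes are exactly those directions caught in the open cone swept out by the winding of the null-homotopic components about the puncture.

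Finally I would assemble the pieces. Since the obstructed set is open and, by the convexity established above, its complement $\cL(M)$ is order-convex in $\Q P^1 \setminus \{[\lambda]\}$, it follows that $\cL(M)$ is a closed interval whenever it is proper, and equals all of $\Q P^1 \setminus \{[\lambda]\}$ exactly when there are no obstructing components, i.e.\ when $M$ is Floer simple. The hypothesis $|\cL(M)| > 1$ rules out the degenerate cases of a single slope or the empty set. I expect the genuinely hard step to be the closedness: proving that the obstructed region is a single open interval and that its boundary slopes lie in $\cL(M)$ requires the finiteness of $\gamma(M)$ together with a minimal-position analysis at the two extreme directions, which is where the real work of \cite{RR} lies.
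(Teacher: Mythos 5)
First, a point of comparison: the paper you are working from does not prove this statement at all --- it is quoted directly from Rasmussen--Rasmussen \cite{RR} --- so your proposal must be measured against their argument. Your first ingredient is sound and is indeed how \cite{RR} begin: the surgery exact triangle plus the identity $|H_1(M(\alpha);\Z)| = c_M\,\Delta(\alpha,\lambda)$ gives the defect subadditivity $d(\mu) \le d(\alpha)+d(\beta)$ for a Farey triple arranged on the arc away from $[\lambda]$, and iterating the mediant construction fills in the interval between two \emph{distance-one} L-space slopes. But, as you yourself flag, this is not where the theorem lives. The content of the theorem is (i) that \emph{any} two L-space slopes, not just Farey-adjacent ones, cobound an interval of L-space slopes on the side away from $[\lambda]$, and (ii) that this interval is closed. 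Your plan addresses (i) and (ii) by appealing to the immersed-curve invariant of Hanselman--Rasmussen--Watson, but everything you need from that theory --- the pairing theorem computing $\rk \widehat{HF}(M(\alpha))$ as a minimal intersection number, the structural description of $\gamma(M)$, and above all the claim that the non-L-space slopes form a single open cone --- is asserted rather than proved, and the last of these claims is essentially equivalent to the theorem itself. Your closing sentence concedes exactly this: the ``minimal-position analysis at the two extreme directions'' \emph{is} the proof, and it is absent. (As a side remark, your structural claims are also imprecise: $\gamma(M)$ may have several homologically essential components, all parallel to the rational longitude, and its components can carry nontrivial local systems, which enter the rank formula; so ``exactly one essential component'' is not available for free.)

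For comparison, \cite{RR} close this gap without immersed curves (which postdate their paper): they show that the existence of two distinct L-space fillings forces $M$ to be \emph{Floer simple}, and then, for Floer simple $M$, they compute $\widehat{HF}$ of every filling in terms of the Turaev torsion of $M$; the set of slopes where the rank equals $|H_1|$ then falls out as an explicitly described closed interval (or all of $\Q P^1 \setminus \{[\lambda]\}$). So your outline identifies the correct first step and a plausible alternative toolkit, but as written it has a genuine gap precisely at the step you label as hard: neither order-convexity of $\cL(M)$ for non-adjacent slopes nor closedness is established by anything you wrote, and the appeal to ``the convexity established above'' in your final paragraph is unjustified, since that convexity was only established for Farey-adjacent pairs.
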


We also recall the following proposition of Ozsv\'ath-Szab\'o:

\begin{proposition}[{\cite[Proposition 9.6]{OSrational}}]\label{prop:OS}
If $K$ is an L-space knot, then $S^3_{p/q}(K)$ is an L-space if and only if $p/q \geq 2g(K)-1$.
\end{proposition}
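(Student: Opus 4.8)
The plan is to compute $\widehat{HF}(S^3_{p/q}(K))$ directly from the Ozsv\'ath--Szab\'o rational surgery mapping cone formula, exploiting the rigid form of the knot Floer complex of an L-space knot. Recall that for an L-space knot $\CFKi(K)$ is a \emph{staircase} complex determined by the Alexander polynomial; consequently each subquotient complex $\widehat A_s = C\{\max(i,j-s)=0\}$ has $H_*(\widehat A_s)\cong \F$, and the two projection maps $\widehat v_s,\widehat h_s\co \widehat A_s \to \CF(S^3)$ induce on homology an isomorphism exactly when $s\geq g$ (for $\widehat v_s$) or $s\leq -g$ (for $\widehat h_s$), and the zero map otherwise (here $g=g(K)$). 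The rational surgery formula identifies $\widehat{HF}(S^3_{p/q}(K))$ with the homology of a mapping cone $\bbX(p/q)$ assembled from copies of $\widehat A_{\lfloor s/q\rfloor}$ mapping into copies of $\CF(S^3)$ via these maps $\widehat v,\widehat h$. Since $|H_1(S^3_{p/q}(K);\Z)| = p$, it then suffices to show that $\rk \widehat{HF}(S^3_{p/q}(K)) = p$ precisely when $p/q \geq 2g-1$.

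Because every summand has rank-$1$ homology and every structure map is an isomorphism or zero, the homology of $\bbX(p/q)$ equals the homology of the induced map $\overline D$ of $\F$-vector spaces, and $\rk \widehat{HF}(S^3_{p/q}(K)) = \dim\ker\overline D + \dim\coker\overline D$. The isomorphism statements above mean that for $\lfloor s/q\rfloor \geq g$ the map $\widehat v$ is an isomorphism and for $\lfloor s/q\rfloor \leq -g$ the map $\widehat h$ is an isomorphism, so the far-out summands cancel in pairs (Gaussian elimination) and $\bbX(p/q)$ reduces to a finite zig-zag. A target generator fails to lie in the image of $\overline D$ exactly when neither of the two maps into it is an isomorphism, i.e.\ when the relevant floors $\lfloor s/q\rfloor$ fall in the gap $(-g,g)$; counting these shows $\coker \overline D = 0$ if and only if $p/q \geq 2g-1$. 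In that case the surviving kernel has dimension exactly $p$ (one generator per Spin$^c$ structure), so $S^3_{p/q}(K)$ is an L-space.

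For $p/q < 2g-1$ the same count leaves at least one target generator uncovered, so $\coker\overline D \neq 0$ and $\rk\widehat{HF}(S^3_{p/q}(K)) > p$, which gives the ``only if'' direction. I expect the main difficulty to be purely organizational: carrying out the cokernel count \emph{sharply}, so that the transition occurs at the single slope $p/q=2g-1$, requires careful bookkeeping of the floors $\lfloor s/q\rfloor$, the resulting $q$-fold repetition of each $\widehat A_i$, and the truncation of the doubly infinite cone. The integer case $q=1$ is a transparent zig-zag for which one computes $\rk\widehat{HF}(S^3_N(K)) = N + 2\max(0,2g-1-N)$ directly, giving an L-space iff $N\geq 2g-1$. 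One could then try to upgrade to all rationals by invoking Theorem \ref{thm:RR}: for the exterior $M=S^3\setminus N(K)$, whose rational longitude $[\lambda]$ is the Seifert slope $0$, the set $\cL(M)$ is either $\Q P^1\setminus\{[\lambda]\}$ or a closed interval, and it contains $\infty$ together with all integers $\geq 2g-1$. However, neither this structural dichotomy nor the integer computation alone pins the lower endpoint to the rational value $2g-1$ rather than to some point of $(2g-2,2g-1)$, so the explicit rational count cannot be bypassed and remains the crux.
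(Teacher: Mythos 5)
You should know at the outset that the paper contains no proof of Proposition \ref{prop:OS}: it is quoted as background from Ozsv\'ath--Szab\'o \cite{OSrational}, so the only meaningful comparison is with the original argument there. Your outline is essentially that argument --- the rational surgery mapping cone $\bbX(p/q)$ applied to the staircase complex of an L-space knot --- and its ingredients are correct: $H_*(\widehat{A}_s) \cong \F$ for every $s$; on homology $\widehat{v}_s$ is an isomorphism if and only if $s \geq g$ and $\widehat{h}_s$ is an isomorphism if and only if $s \leq -g$, both vanishing otherwise; and over $\F$ the homology of the cone is $\ker \overline{D} \oplus \coker \overline{D}$.

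The genuine gap is the one you flag yourself: the decisive count --- that $\coker\overline{D}$ vanishes exactly when $p/q \geq 2g-1$, with $\dim\ker\overline{D} = p$ at that point --- is asserted rather than performed, and your closing paragraph concedes it ``remains the crux.'' Two things are needed to close it. First, your local criterion (a $B$-summand is missed by the image exactly when neither map into it is an isomorphism) is \emph{false} for general zig-zags of isomorphisms --- $n$ sources mapping isomorphically onto $n+1$ targets cover every target yet have nonzero cokernel --- but it is valid here because the conditions $\lfloor s/q \rfloor \geq g$ and $\lfloor s/q\rfloor \leq -g$ are mutually exclusive (as $g \geq 1$), so each $A$-summand maps isomorphically onto at most one $B$-summand, and hence the image of $\overline{D}$ is precisely the span of the $B$'s receiving at least one isomorphism; this observation must be made explicit. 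Second, the count itself (take $p,q>0$, the substantive case): with the $A$-summand at index $s$ equal to $\widehat{A}_{\lfloor s/q\rfloor}$, mapping by $\widehat{v}$ to the $B$ at index $s$ and by $\widehat{h}$ to the $B$ at index $s+p$, the $B$ at index $s$ is missed if and only if $\lfloor s/q \rfloor \leq g-1$ and $\lfloor (s-p)/q\rfloor \geq 1-g$, i.e.\ $p + q(1-g) \leq s \leq qg-1$, so $\dim\coker\overline{D} = \max\bigl(0,\, q(2g-1)-p\bigr)$; the kernel is spanned by the $A$'s with both maps zero, of which there are $q(2g-1)$ (namely $q(1-g) \leq s \leq qg-1$), together with one element for each $B$ receiving two isomorphisms, of which there are $\max\bigl(0,\, p-q(2g-1)\bigr)$. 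Summing, $\rk\widehat{HF}(S^3_{p/q}(K)) = p + 2\max\bigl(0,\, q(2g-1)-p\bigr)$, which equals $|H_1(S^3_{p/q}(K);\Z)| = p$ if and only if $p/q \geq 2g-1$. So your plan does close, and your final self-criticism is accurate in one respect: the dichotomy of Theorem \ref{thm:RR} combined with only the integer computation cannot pin the endpoint of $\cL(M_K)$ to the rational value $2g-1$, so this rational bookkeeping is genuinely unavoidable.
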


\noindent In other words, if $K$ is an L-space knot and $M_K = S^3 \setminus N(K)$, then $\cL^\circ(M_K) = (2g(K)-1, \infty)$, where we use the usual identification of slopes on the knot complement with $\Q \cup \{\frac{1}{0}\}$. If $K$ is a negative L-space knot, then $\cL^\circ(M_K) = (-\infty, -2g(K)+1)$. 

In what follows, we shift our perspective, and rather than view $P$ as a knot in $D^2 \times S^1$, we instead consider a link $P \cup J \subset S^3$ where $J$ is unknotted. To obtain a knot in the solid torus, we consider $P \subset S^3 \setminus N(J)$. See Figure \ref{fig:patternlink}. We write $\tilde{P}$ to denote the image of $P$ in surgery along $J$, and similarly $\tilde{J}$ for the image of $J$ in surgery along $P$. Let $S^3_{r,s}(P \cup J)$ denote the result of $r$-surgery along $P$ and $s$-surgery along $J$.

\begin{figure}[ht]
\labellist
\pinlabel {${P}$} at 140 104
\pinlabel {$J$} at 23 100
\endlabellist
\includegraphics[scale=0.8]{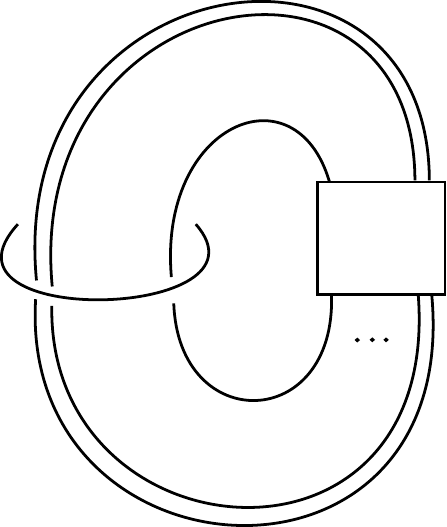}
\caption{The link $P \cup J$.}
\label{fig:patternlink}
\end{figure}

\begin{lemma}\label{lem:main}
Let $L = P \cup J$ be a link in $S^3$ such that
\begin{enumerate}
	\item $J$ is unknotted,
	\item \label{it:winding} $|\lk(P, J)| = w \geq 2$,
	\item \label{it:Jdisk} $J$ bounds a disk $D \subset S^3$ which intersects $P$ in exactly $w$ points.
	\newcounter{enumi_saved}
	\setcounter{enumi_saved}{\value{enumi}}
\end{enumerate}
If there exist positive integers $a$, $b$, and $r$ such that 
\begin{enumerate}
\setcounter{enumi}{\value{enumi_saved}}
	\item \label{it:ineqra} $r \geq 2g(P)+aw(2w-1)-1$, 
	\item  \label{it:ineqrb} $b \geq \frac{2g(P)+r-1}{w}$, 
	\item the knot $\tilde{P}_a \subset S^3_{1/a}(J) \cong S^3$ is an L-space knot,
	\item the knot $\tilde{P}_b \subset S^3_{1/b}(J) \cong S^3$ is a negative L-space knot,
\end{enumerate}
then $[-\infty, \frac{1}{b}] \cup [\frac{1}{a}, \infty] \subset \cL(M_{r})$, where $M_r=S^3_r(P) \setminus N(\tilde{J})$.
\end{lemma}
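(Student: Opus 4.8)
The plan is to analyze the Dehn fillings of $M_r$ directly and then pin down the full L-space set using the structural result of Theorem~\ref{thm:RR}. First I would fix the identification of slopes on $\d M_r = \d N(\tilde J)$ with $\Q P^1$ via the meridian and Seifert longitude of the unknot $J$, so that the slope $\tfrac1k$ corresponds to $\tfrac1k$-surgery on $J$. Since $J$ is disjoint from $P$, such a filling is $M_r(\tfrac1k) = S^3_{r,1/k}(P \cup J)$, and performing the $\tfrac1k$-surgery on $J$ first recovers $S^3$ with $P$ sent to $\tilde P_k$. Here condition~\eqref{it:Jdisk} is essential: because $D$ meets $P$ in exactly $w = |\lk(P,J)|$ points, these intersections are coherently oriented, so the Rolfsen twist along $J$ adds $k$ coherent full twists to $w$ parallel strands. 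This has two effects I would record precisely: the surgery coefficient on $P$ changes from $r$ to $r - kw^2$, and a genus-$g(P)$ Seifert surface for $P$ acquires $|k|w(w-1)$ extra bands without new Seifert disks, giving the bound $g(\tilde P_k) \le g(P) + \tfrac{|k|w(w-1)}2$. Thus $M_r(\tfrac1k) = S^3_{r - kw^2}(\tilde P_k)$.

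Next I would verify that the two endpoint slopes lie in $\cL(M_r)$. For $\tfrac1a$, Proposition~\ref{prop:OS} applied to the L-space knot $\tilde P_a$ shows $S^3_{r-aw^2}(\tilde P_a)$ is an L-space provided $r - aw^2 \ge 2g(\tilde P_a) - 1$; by the genus bound this follows from $r \ge 2g(P) + aw(w-1) + aw^2 - 1 = 2g(P) + aw(2w-1) - 1$, which is exactly hypothesis~\eqref{it:ineqra}. Symmetrically, since $\tilde P_b$ is a negative L-space knot, $\cL^\circ(M_{\tilde P_b}) = (-\infty, -2g(\tilde P_b)+1)$, so $S^3_{r-bw^2}(\tilde P_b)$ is an L-space provided $r - bw^2 \le -2g(\tilde P_b) + 1$; the genus bound reduces this to $bw \ge 2g(P) + r - 1$, i.e.\ hypothesis~\eqref{it:ineqrb}. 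Hence $\tfrac1a, \tfrac1b \in \cL(M_r)$, and in particular $|\cL(M_r)| \ge 2$.

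I would then locate the rational longitude. From the surgery presentation $H_1(M_r) \cong \Z^2/\langle r\mu_P + w\mu_J\rangle$, and working over $\Q$ one has $\lambda_J \sim w\mu_P \sim -\tfrac{w^2}{r}\mu_J$, so the slope killed in $H_1(M_r;\Q)$ — the rational longitude — is $[\lambda] = \tfrac{w^2}{r}$. A filling along $[\lambda]$ has $b_1 \ge 1$ and so is never a rational homology sphere, whence $[\lambda] \notin \cL(M_r)$. Using the hypotheses I would check that $[\lambda]$ sits strictly inside the gap $(\tfrac1b, \tfrac1a)$: condition~\eqref{it:ineqra} gives $r > aw^2$, so $\tfrac{w^2}{r} < \tfrac1a$, while condition~\eqref{it:ineqrb} gives $bw^2 > r$ (using $w \ge 2$), so $\tfrac{w^2}{r} > \tfrac1b$.

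Finally I would combine these facts with Theorem~\ref{thm:RR}. Since $|\cL(M_r)| > 1$, the set $\cL(M_r)$ is either $\Q P^1 \setminus \{[\lambda]\}$ or a closed interval. In the first case the conclusion is immediate, as the target arc $[-\infty,\tfrac1b]\cup[\tfrac1a,\infty]$ avoids $[\lambda]$. In the second case, $\cL(M_r)$ is a connected arc containing $\tfrac1a$ and $\tfrac1b$ but not $[\lambda]$; since $[\lambda]$ lies on the short arc between $\tfrac1b$ and $\tfrac1a$, the interval must instead contain the complementary arc through $\infty$, which is exactly $[-\infty,\tfrac1b]\cup[\tfrac1a,\infty]$. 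I expect the main obstacle to be the bookkeeping of the first paragraph: correctly tracking the sign of the framing change under the Rolfsen twist and establishing $g(\tilde P_k) \le g(P) + \tfrac{|k|w(w-1)}2$ so that the constants match hypotheses~\eqref{it:ineqra} and~\eqref{it:ineqrb} on the nose. The coherence of the $w$ intersection points supplied by condition~\eqref{it:Jdisk} is precisely what makes this genus bound hold.
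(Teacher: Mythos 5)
Your proposal is correct and follows essentially the same route as the paper's proof: verify that the $\frac{1}{a}$ and $\frac{1}{b}$ fillings of $M_r$ are L-spaces via Proposition \ref{prop:OS} and the genus bound $g(\tilde{P}_k) \leq g(P) + \frac{|k|w(w-1)}{2}$, observe that the filling at slope $\frac{w^2}{r}$ (which lies strictly between $\frac{1}{b}$ and $\frac{1}{a}$) is not an L-space since it has infinite first homology, and conclude with Theorem \ref{thm:RR}. Your explicit identification of $\frac{w^2}{r}$ as the rational longitude is just a rephrasing of the paper's computation that $H_1(S^3_{r, w^2/r}(P \cup J); \Z) \cong \Z$, so the two arguments coincide in substance.
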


\begin{remark}\label{rem:Jslopes}
In the above lemma, we identify slopes on $M_r$, the complement of $\tilde{J}$, with $\Q \cup \{\frac{1}{0}\}$ such that filling $M_r$ along slope $s$ yields $S^3_{r,s}(P \cup J)$.
\end{remark}

\begin{proof}
We will show that if the hypotheses of the lemma are satisfied, then
\begin{itemize}
	\item $S^3_{r, w^2/r}(P \cup J)$ is not an L-space,
	\item $S^3_{r, 1/a}(P \cup J)$ and $S^3_{r, 1/b}(P \cup J)$ are L-spaces.
\end{itemize}
We now show that the two bullet points above imply the lemma. Note that
\begin{align*}
	r & \geq 2g(P)+aw(2w-1)-1 \\
		&\geq a(2w^2-w)-1 \\
		&> aw^2
\end{align*}
since $w \geq 2$ and $a$ is a positive integer. In particular, $r > 4$. We also have that
\begin{align*}
	b & \geq \frac{2g(P)+r-1}{w} \\
		&\geq \frac{r-1}{w} \\
		&> \frac{r}{w^2},
\end{align*}
where the last inequality follows from the fact that $r > 4$ and $w \geq 2$. Thus, we have shown that $0 < a < \frac{r}{w^2} < b$, i.e., $\frac{1}{b} < \frac{w^2}{r} < \frac{1}{a}$. It then follows from the bullet points above together with Theorem \ref{thm:RR} that $[-\infty, \frac{1}{b}] \cup [\frac{1}{a}, \infty] \subset \cL(M_{r})$, as desired.

Since $|\lk(P, J)| = w$, we have that $H_1(S^3_{r, w^2/r}(P \cup J); \Z) \cong \Z$, and hence $S^3_{r, w^2/r}(P \cup J)$ is not an L-space.

Next, we show that $S^3_{r, 1/a}(P \cup J)$ is an L-space. Note that
\[ S^3_{r, 1/a}(P \cup J) \cong S^3_{r-aw^2}(\tilde{P}_a). \]
\begin{figure}[ht]
\labellist
\pinlabel {$1/a$} at -2 100
\pinlabel {$P$} at 120 107
\pinlabel {$r$} at 137 155
\pinlabel {$\cong$} at 165 100
\pinlabel {$-a$} at 210 102
\pinlabel {$P$} at 290 107
\pinlabel {$r-aw^2$} at 327 155
\pinlabel {$+1$} at 361 43
\pinlabel {$\cong$} at 388 41
\endlabellist
\includegraphics[scale=0.8]{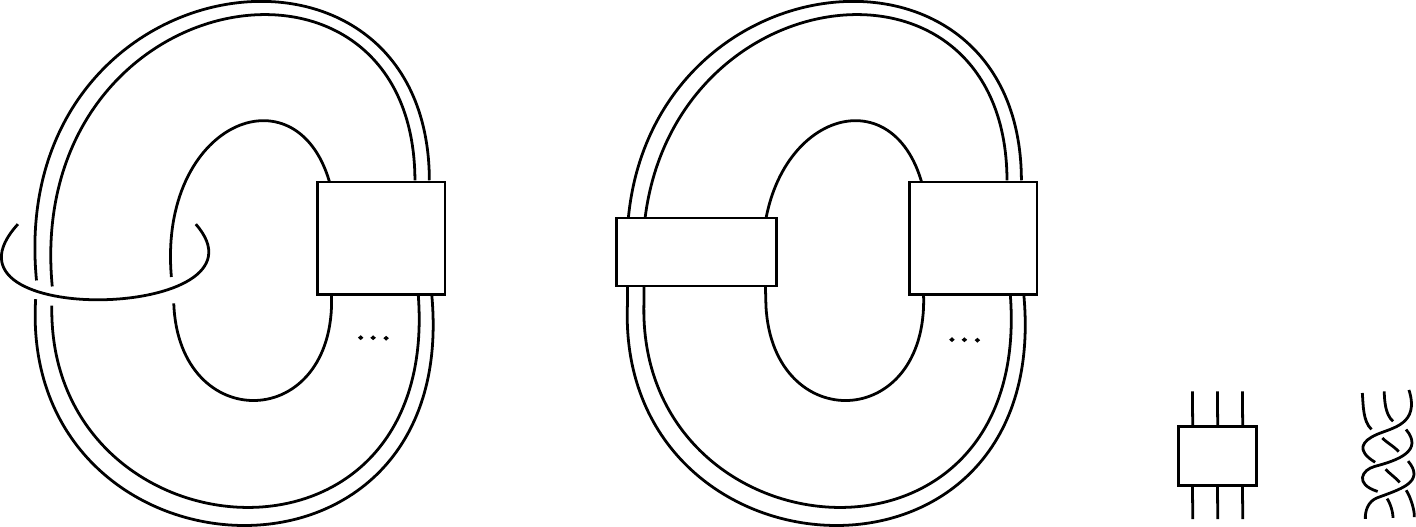}
\caption{Left, $S^3_{r, 1/a}(P \cup J)$. Right, $S^3_{r-aw^2}(\tilde{P}_a)$.}
\label{fig:patternlink2}
\end{figure}

\noindent See Figure \ref{fig:patternlink2}. Since $\tilde{P}_a \subset S^3_{1/a}(J) \cong S^3$ is an L-space knot, it follows that $S^3_{r, 1/a}(P \cup J)$ is an L-space if $r-aw^2 \geq 2g(\tilde{P}_a)-1$. We have that 
\[ g(\tilde{P}_a) \leq g(P) + \frac{aw(w-1)}{2}, \] 
since adding a (positive or negative) full twist to $P$ increases the genus by at most $\frac{w(w-1)}{2}$ by \eqref{it:winding} and \eqref{it:Jdisk}, and $\tilde{P}_a$ is obtained from $P$ by $a$ negative full twists. Hence,
\begin{align*}
	2g(\tilde{P}_a) -1 &\leq 2g(P) + aw(w-1) - 1 \\ 
	&\leq r-aw^2,
\end{align*}
where the second inequality follows from \eqref{it:ineqra}. Hence $S^3_{r, 1/a}(P \cup J)$ is an L-space.

Similarly, we show that $S^3_{r, 1/b}(P \cup J)$ is an L-space. Note that
\[ S^3_{r, 1/b}(P \cup J) \cong S^3_{r-bw^2}(\tilde{P}_b). \]
Since $\tilde{P}_b \subset S^3_{1/b}(J) \cong S^3$ is a \emph{negative} L-space knot, it follows that $S^3_{r, 1/b}(P \cup J)$ is an L-space if $r-bw^2 \leq -2g(\tilde{P}_b)+1$. We have that 
\[ g(\tilde{P}_b) \leq g(P) + \frac{bw(w-1)}{2} \] 
and so
\begin{align*}
-2g(\tilde{P}_b) +1 &\geq -2g(P) -bw(w-1) +1 \\
	&\geq r-bw^2, \\
\end{align*}
where the second inequality follows from \eqref{it:ineqrb}. Hence $S^3_{r, 1/b}(P \cup J)$ is an L-space, completing the proof of the lemma.
\end{proof}

\begin{proof}[Proof of Theorem \ref{thm:sufficient}]
Let $M_K = S^3 \setminus N(K)$. If $K$ is an L-space knot, then by Proposition \ref{prop:OS}, we have that $\cL^\circ(M_K) = (2g(K)-1, \infty)$.

Let $P \subset D^2 \times S^1$ be a pattern satisfying the hypotheses of the theorem, and let $L = P \cup J$ be the associated link. Let $a=2g(K)$, and choose $r$ and $b$ sufficiently large so that they satisfy \eqref{it:ineqra} and \eqref{it:ineqrb} in Lemma \ref{lem:main}. Then Lemma \ref{lem:main} implies that 
\begin{equation}
\label{eqn:intervals}
[-\infty, \frac{1}{b}] \cup [\frac{1}{2g(K)}, \infty] \subset \cL(M_r).
\end{equation}

We consider the closed manifold obtained by gluing $M_r$ and $M_K$ via $h \co \d M_r \rightarrow \d M_K$ which identifies the meridian of $J$ with the 0-framed longitude of $K$ and the 0-framed longitude of $J$ with the meridian of $K$. Note that $M_r \cup_h M_K \cong S^3_r(P(K))$.

It remains to compare $\cL^\circ(M_K)$ and $h(\cL^\circ(M_r))$. Note that given a curve of slope $\frac{p}{q}$ on $\d M_r$, the corresponding slope on $\d M_K$ is $h(\frac{p}{q}) = \frac{q}{p}$. Namely, 
\begin{equation*}
	h\Big([-\infty, \frac{1}{b}) \cup (\frac{1}{2g(K)}, \infty]\Big) = [-\infty, 2g(K)) \cup (b, \infty].
\end{equation*}
Since $\cL^\circ(M_K) \cup h(\cL^\circ(M_r)) = \Q \cup \{\frac{1}{0}\}$, Theorem \ref{thm:HRRW} implies that $r$-surgery along $P(K)$ is an L-space, i.e., $P(K)$ is an L-space knot, as desired.
\end{proof}

\begin{remark}\label{rem:n-2g}
It follows from \eqref{eqn:intervals} that for a pattern $P$ satisfying conditions \eqref{it:disk}--\eqref{it:negative} in Theorem \ref{thm:sufficient}, the knot $P(U, n)$ is an L-space knot for all $n \geq -2g(K)$. Indeed, the knot $P(U, n)$ is isotopic to $\tilde{P}_{-n} \subset S^3_{1/{-n}}(J) \cong S^3$, and $S^3_{r, 1/-n}(P \cup J) \cong S^3_{r+nw^2}(P(U,n))$, which is an L-space whenever $\frac{1}{-n} \in \cL(M_r)$. Thus, $P(U,n)$ is an L-space knot if $\frac{1}{-n} \in \cL(M_r)$ and $r+nw^2$ is positive; these conditions are satisfied when $n \geq -2g(K)$.
\end{remark}

\section{Further Remarks} \label{sec:fr}

Note that in the proof of Theorem \ref{thm:sufficient}, we found that $\cL^\circ(M_K) = (2g(K)-1, \infty)$ and $h(\cL^\circ(M_r)) \supset [-\infty, 2g(K)) \cup (b, \infty]$. The fact that the conditions in Theorem \ref{thm:sufficient} are not necessary conditions for $P(K)$ to be an L-space knot is likely due to the overlap between $\cL^\circ(M_K)$ and $h(\cL^\circ(M_r)$ in the interval $(2g(K)-1, 2g(K))$.

Regarding necessary conditions for $P(K)$ to be an L-space knot, the author, Lidman, and Vafaee made the following conjecture (c.f. \cite[Question 22]{BakerMoore}):

\begin{conjecture}[{\cite[Conjecture 1.7]{HLV}}] \label{con:HLV}
If $P(K)$ is an L-space knot, then so are $P(U)$ and $K$.
\end{conjecture}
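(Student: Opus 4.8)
The plan is to pass to the immersed-curve reformulation of bordered Heegaard Floer homology underlying \cite{HRRW}, in which a knot complement $M_K = S^3 \setminus N(K)$ is encoded by an immersed multicurve $\gamma_K$ in the punctured boundary torus $\d M_K \setminus \{\textup{pt}\}$, and $K$ is an L-space knot precisely when $\gamma_K$ is homotopic to a single embedded curve carrying the longitude class that is \emph{monotone}, i.e. never backtracks in the longitude direction. The satellite operation is then encoded by a pairing: the two-cusped complement of $P \cup J$ determines an operation $\Phi_P$ on immersed curves with $\gamma_{P(K)} = \Phi_P(\gamma_K)$, and $P(K)$ is an L-space knot iff $\gamma_{P(K)}$ is of this monotone type. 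I note at the outset that the fiberedness half of the conjecture is already available from the excerpt: by \cite{OSlens} and \cite{Ni} an L-space knot is fibered, and by \cite{HMS} a fibered $P(K)$ forces $K$ and $P(U)$ fibered with $w(P) \neq 0$. Thus the genuine content is to upgrade ``fibered'' to ``L-space knot,'' that is, ``monotone'' at the level of $\gamma$.

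For the companion $K$, I would first treat the curve $\gamma_K$ directly. The operation $\Phi_P$ records the winding number $w = w(P) \neq 0$, so $\gamma_{P(K)}$ consists, roughly, of $|w|$ sheared parallel translates of $\gamma_K$ threaded through the fixed pattern data. The key point I would establish is that $\Phi_P$ is \emph{feature-reflecting}: a backtracking (non-monotone) arc of $\gamma_K$ produces a backtracking arc of $\gamma_{P(K)}$ that cannot be removed by homotopy, because the $|w| \geq 1$ parallel copies do not cancel one another once the pattern is attached. Hence if $\gamma_{P(K)}$ is monotone then so is $\gamma_K$, giving that $K$ is an L-space knot, with the sign of $w$ together with Proposition \ref{prop:OS} fixing the direction of monotonicity. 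An alternative, more classical route would run the gluing of Theorem \ref{thm:HRRW} in reverse: writing $S^3_N(P(K)) \cong M_N \cup_h M_K$ as in the proof of Theorem \ref{thm:sufficient}, L-space-ness of large surgeries would constrain $\cL(M_K)$ to be an interval whose endpoints are the boundary slopes visible in $\gamma_K$, and Theorem \ref{thm:RR} with Proposition \ref{prop:OS} would then identify $K$; but this route is hampered by having to rule out $\cL(M_K)$ degenerate a priori, which is exactly what the curve count sidesteps.

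For the pattern $P(U)$, I would argue by monotonicity in the companion. The unknot curve $\gamma_U$ is the simplest monotone curve, and for any $K$ the curve $\gamma_K$ is obtained from (parallel copies of) $\gamma_U$ by inserting a compact ``staircase'' piece that records $g(K)$ and the Alexander data of $K$. Since $\Phi_P$ acts locally on $\gamma$, any backtracking arc of $\gamma_{P(U)} = \Phi_P(\gamma_U)$ would reappear inside $\gamma_{P(K)} = \Phi_P(\gamma_K)$; as the latter is monotone by hypothesis, so must be the former, whence $P(U)$ is an L-space knot.

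The hard part, and the reason this remains a conjecture rather than a theorem, will be establishing the feature-reflecting and locality properties of $\Phi_P$ in full generality, with no control on $P$ beyond $w(P) \neq 0$. Unlike the sufficiency direction of Theorem \ref{thm:sufficient}, where hypotheses \eqref{it:disk}--\eqref{it:negative} supplied genus bounds such as $g(\tilde P_a) \leq g(P) + \tfrac{aw(w-1)}{2}$ that kept everything inside the computable L-space range, here the pattern is arbitrary and one must rule out ``accidental cancellation,'' in which a non-monotone companion conspires with the internal structure of $P$ to produce a monotone satellite. Making $\Phi_P$ precise as an operation on immersed curves, and proving that it can neither create nor destroy monotonicity except as dictated by its inputs, is the crux; the winding-number and genus inequalities used throughout the excerpt suggest that a quantitative bound on the ``turning'' of $\Phi_P(\gamma)$ in terms of that of $\gamma$ is the natural mechanism to pursue.
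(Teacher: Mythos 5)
There is a genuine gap, and it is exactly where you place it yourself: your argument rests on two unproven claims about the satellite operation $\Phi_P$ on immersed curves --- that it is ``feature-reflecting'' and ``local'' for an \emph{arbitrary} pattern with $w(P) \neq 0$ --- and your closing paragraph concedes these are open. Those claims are not technical lemmas to be filled in later; they are at least as strong as the conjecture itself (a general pairing formula of this kind is known only for special pattern classes such as cables and certain braid/one-bridge patterns, and ruling out the ``accidental cancellation'' you mention is precisely the open content). So what you have written is a research program, not a proof. Note also that the paper itself does \emph{not} prove this statement: it is recorded as Conjecture \ref{con:HLV}, and the closest the paper comes is Proposition \ref{prop:RR}, which derives the conclusion (and more) \emph{conditionally} on the Rasmussen--Rasmussen gluing conjecture, Conjecture \ref{con:RR}.

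It is worth comparing your sketch to that conditional argument, because the paper's route exposes two issues your outline glosses over. First, Conjecture \ref{con:RR} applies only to boundary-incompressible pieces, so the paper invokes Scharlemann's extension of Gabai's work to split off the cases where $M_r = S^3_r(P)\setminus N(\tilde{J})$ is a solid torus or reducible (i.e.\ $P$ is a $0$- or $1$-bridge braid, or a cable), handling those by the known results of \cite{HeddencablingII, HomLspace} and \cite{HLV}; any complete argument, immersed-curve or otherwise, must account for these degenerate cases separately or explain why it need not. Second, the gluing argument a priori only shows that \emph{either} $K$ \emph{or} $-K$ is an L-space knot (since $\cL^\circ(M_K)$ must be nonempty but could lie on either side); the paper excludes $-K$ using strong quasipositivity of L-space knots \cite{Hedden2010} together with the structure of the fiber surface of $P(K)$. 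Your corresponding step --- ``the sign of $w$ together with Proposition \ref{prop:OS} fixing the direction of monotonicity'' --- cannot work as stated: reversing the orientation of $P$ flips the sign of $w$ without changing the satellite knot, so the sign of $w$ carries no information about which of $K$, $-K$ is the L-space knot. Any repair of your approach will need a genuine asymmetry input, such as the quasipositivity/fiber-surface argument the paper uses.
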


We recall Conjecture 1.7 of J. Rasmussen and S. Rasmussen (c.f. \cite{BoyerClay}, \cite{Hanselman}):

\begin{conjecture}[\cite{RR}] \label{con:RR}
Let $M_1, M_2$ be compact, connected, oriented 3-manifolds with torus boundary. If $M_1, M_2$ are boundary incompressible, then $M_1 \cup_h M_2$ is an L-space if and only if $\cL^\circ(M_2) \cup h(\cL^\circ(M_1)) = \Q P^1$.
\end{conjecture}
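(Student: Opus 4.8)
The plan is to prove the two implications separately, and to observe that one of them is already in hand. The backward implication --- that $\cL^\circ(M_2) \cup h(\cL^\circ(M_1)) = \Q P^1$ forces $M_1 \cup_h M_2$ to be an L-space --- is exactly Theorem \ref{thm:HRRW}: unwinding the quantifiers, the covering condition on $\d M_2$ says precisely that every slope $\alpha \subset \d M_1$ satisfies $\alpha \in \cL^\circ(M_1)$ or $h(\alpha) \in \cL^\circ(M_2)$, which is the hypothesis of that theorem. So all of the content lies in the forward implication: if $M_1 \cup_h M_2$ is an L-space, then the two sets of strict L-space slopes cover $\Q P^1$. I would attack this through the immersed-curve technology of Hanselman--Rasmussen--Watson, which is the natural common home for both the gluing statement and the slope sets $\cL^\circ(M_i)$.

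First I would recall the relevant invariants. To each boundary-incompressible $M_i$ one associates an immersed multicurve $\gamma(M_i)$ in $\d M_i$ (with local systems), so that the pairing theorem computes $\widehat{HF}(M_1 \cup_h M_2)$ as the Floer homology of the pair $\gamma(M_1)$ and $h(\gamma(M_2))$ in the torus; in minimal position this rank equals the geometric intersection number. Thus $M_1 \cup_h M_2$ is an L-space exactly when the two multicurves meet minimally, with no excess intersection beyond the homologically forced crossings. In the same language, a slope $\alpha$ lies in $\cL^\circ(M_i)$ precisely when $\gamma(M_i)$ can be isotoped off a line $\ell_\alpha$ of slope $\alpha$ apart from those forced crossings --- equivalently, no component of $\gamma(M_i)$ winds against $\ell_\alpha$. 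Boundary incompressibility enters here to guarantee that $\gamma(M_i)$ has no null-homotopic, disk-bounding components, which is what makes $\cL^\circ(M_i)$ an honest interval as in Theorem \ref{thm:RR} and ensures that intersection counts are governed by genuine winding numbers.

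With this dictionary in place I would run the forward direction by contraposition. Suppose the covering fails, so there is a slope $\alpha$ with $\alpha \notin h(\cL^\circ(M_1))$ and $\alpha \notin \cL^\circ(M_2)$. By the translation above, both $h(\gamma(M_1))$ and $\gamma(M_2)$ genuinely wind against the line $\ell_\alpha$. The goal is then to leverage this common obstruction slope to produce an excess intersection point between $h(\gamma(M_1))$ and $\gamma(M_2)$ that survives minimal position, forcing $\rk \widehat{HF}(M_1 \cup_h M_2) > |H_1|$ and hence showing the glued manifold is not an L-space.

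The hard part is exactly the last step. When neither side absorbs $\alpha$, each multicurve contributes crossings with $\ell_\alpha$ on both sides, and one must rule out the possibility that the excess crossings of $h(\gamma(M_1))$ and of $\gamma(M_2)$ cancel against one another in minimal position rather than accumulating. Controlling this requires a careful analysis of how the winding regions of the two multicurves around the slope $\alpha$ interleave in the torus, and it is further complicated by nontrivial local systems and by immersed (non-embedded) components, where naive winding-number bookkeeping can break down. This is precisely the phenomenon that keeps the statement a conjecture rather than a theorem, and it is where the real work would be.
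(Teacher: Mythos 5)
The statement you set out to prove is not proven anywhere in this paper: it is Conjecture \ref{con:RR}, recalled verbatim from \cite{RR}, and the paper only ever uses it as a \emph{hypothesis} (Proposition \ref{prop:RR} begins ``Suppose Conjecture \ref{con:RR} is true''). So there is no proof of the paper's to compare against, and any complete argument would be new mathematics. Your proposal is not such an argument, and to your credit you say so yourself. What you do establish is the backward implication: since $h$ is a bijection on slopes, the covering condition $\cL^\circ(M_2) \cup h(\cL^\circ(M_1)) = \Q P^1$ is exactly the hypothesis of Theorem \ref{thm:HRRW}, so that direction is genuinely in hand (indeed Theorem \ref{thm:HRRW} does not even require boundary incompressibility). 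This reduction is correct and is the easy half.

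The forward implication is where the entire content of the conjecture lives, and your treatment of it is a plan rather than a proof. The dictionary you invoke --- immersed multicurves with local systems, a pairing theorem computing $\rk \widehat{HF}(M_1 \cup_h M_2)$ as a minimal geometric intersection number, and the characterization of $\cL^\circ(M_i)$ via winding against a line of slope $\alpha$ --- appears nowhere in this paper and would itself have to be proven or cited; it is not a consequence of Theorems \ref{thm:HRRW} and \ref{thm:RR}, which are the only tools the paper provides. Even granting that machinery, your contraposition stalls at precisely the step that matters: showing that when neither $h(\gamma(M_1))$ nor $\gamma(M_2)$ can be isotoped off the line $\ell_\alpha$, the resulting intersection points cannot all cancel in minimal position, so that the rank of $\widehat{HF}$ strictly exceeds $|H_1|$. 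You explicitly flag this as ``where the real work would be,'' which is an honest assessment but means the argument has a genuine, central gap. (Your instinct for the toolkit is sound --- this covering statement is what the Hanselman--Rasmussen--Watson immersed-curve program was later used to settle --- but within the confines of this paper and of your proposal, the statement remains exactly what the paper calls it: a conjecture.)
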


\begin{proposition}\label{prop:RR}
Suppose Conjecture \ref{con:RR} is true. Let $P(K)$ be an L-space knot. Then $P(U)$ and $K$ are L-space knots. Furthermore, $P(U, n)$ is an L-space knot for all $n \geq -2g(K)+1$, and $P(U, -N)$ is a negative L-space knot for all sufficiently large $N$.
\end{proposition}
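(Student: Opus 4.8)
The plan is to reverse the proof of Theorem~\ref{thm:sufficient}, using the \emph{if and only if} in Conjecture~\ref{con:RR} in place of the one-directional Theorem~\ref{thm:HRRW}. Write $g=g(K)$, $w=w(P)$, $M_K=S^3\setminus N(K)$, and $M_r=S^3_r(P)\setminus N(\tilde J)$, and use the gluing $h$ from the proof of Theorem~\ref{thm:sufficient}, so that $h(p/q)=q/p$ and $S^3_r(P(K))\cong M_r\cup_h M_K$. Since $P(K)$ is an L-space knot it is a nontrivial fibered knot with $w\neq 0$ (by \cite{OSlens,Ni,HMS}), and its splitting torus is the incompressible companion torus, so $M_K$ and $M_r$ are boundary incompressible and Conjecture~\ref{con:RR} applies to this splitting. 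By Proposition~\ref{prop:OS} applied to $P(K)$, the manifold $S^3_r(P(K))$ is an L-space exactly for $r\geq 2g(P(K))-1$, so for all such $r$,
\[ \cL^\circ(M_K)\cup h(\cL^\circ(M_r))=\Q P^1. \]

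First I would deduce that $K$ is an L-space knot. As in the proof of Lemma~\ref{lem:main}, the rational longitude of $M_r$ is the slope $w^2/r$, the unique slope whose filling $S^3_{r,w^2/r}(P\cup J)$ has $b_1=1$; hence $w^2/r\notin\cL(M_r)$ and so $r/w^2=h(w^2/r)\notin h(\cL^\circ(M_r))$. The displayed equality forces $r/w^2\in\cL^\circ(M_K)$, and letting $r\to\infty$ shows that $\cL^\circ(M_K)$ contains arbitrarily large positive slopes. In particular $K$ admits a positive L-space surgery, so $K$ is an L-space knot and Proposition~\ref{prop:OS} gives $\cL^\circ(M_K)=(2g-1,\infty)$.

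With $\cL^\circ(M_K)$ determined I would rewrite the gluing condition. Set $J_r=\Q P^1\setminus\cL^\circ(M_r)$, a point or closed interval by Theorem~\ref{thm:RR}. Using $h(s)=1/s$ and $h\bigl((2g-1,\infty)\bigr)=(0,\tfrac{1}{2g-1})$, the condition $\cL^\circ(M_K)\cup h(\cL^\circ(M_r))=\Q P^1$ is equivalent to $J_r\subseteq(0,\tfrac{1}{2g-1})$; hence this containment holds for every $r\geq 2g(P(K))-1$, and in particular $\cL^\circ(M_r)\supseteq\Q P^1\setminus(0,\tfrac{1}{2g-1})$. Since $\infty$ lies in this set, $S^3_r(P(U))=S^3_{r,\infty}(P\cup J)$ is an L-space, so $P(U)$ is an L-space knot. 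For $n\geq -2g+1$ the slope $-1/n$ also lies in $\Q P^1\setminus(0,\tfrac{1}{2g-1})$, so, using $S^3_{r,-1/n}(P\cup J)\cong S^3_{r+nw^2}(P(U,n))$ from Remark~\ref{rem:n-2g} and taking $r$ large enough that $r+nw^2>0$, the knot $P(U,n)$ has a positive L-space surgery and is an L-space knot.

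The main obstacle is the final claim, and it is exactly here that the \emph{reverse} direction of Conjecture~\ref{con:RR} does essential work: the containment $J_r\subseteq(0,\tfrac{1}{2g-1})$ bounds $J_r$ not only above but also below, forcing $\alpha_r:=\inf J_r>0$. Thus every slope in $(0,\alpha_r)$ is a strict L-space slope of $M_r$. Because the rational longitude $w^2/r$ belongs to $J_r$, any slope $1/N<\alpha_r$ satisfies $1/N<w^2/r$, i.e.\ $Nw^2>r$; so for all integers $N>1/\alpha_r$ the manifold $S^3_{r-Nw^2}(P(U,-N))\cong S^3_{r,1/N}(P\cup J)$ is an L-space with \emph{negative} surgery coefficient, whence $P(U,-N)$ is a negative L-space knot. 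I expect the only genuinely delicate points to be verifying the boundary incompressibility needed to invoke Conjecture~\ref{con:RR}, and the bookkeeping identifying $w^2/r$ as the rational longitude and $h$ as $s\mapsto 1/s$; the structural input $\alpha_r>0$ that powers the negative L-space conclusion comes for free from the \emph{if and only if}.
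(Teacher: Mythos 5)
Your argument has one genuine gap, and it sits exactly where you guessed the delicacy would be: the boundary incompressibility needed to invoke Conjecture~\ref{con:RR}. You assert that because the companion torus is incompressible in the satellite complement $S^3 \setminus N(P(K))$, both $M_K$ and $M_r$ are boundary incompressible. That inference is false: $M_r = S^3_r(P) \setminus N(\tilde{J})$ is obtained by Dehn surgery along $P$ inside the solid torus, and incompressibility of the companion torus \emph{before} surgery says nothing about the surgered manifold. Concretely, if $P$ is a cable pattern and $r$ is the cabling slope, then $M_r \cong W \# L(p,q)$ and its boundary compresses; if $P$ is a 0- or 1-bridge braid (a Berge--Gabai pattern), then $M_r$ is a solid torus for suitable $r$. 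These are not ignorable edge cases: cables and Berge--Gabai satellites of L-space knots are precisely the principal known examples of satellite L-space knots, so your proof breaks down on the central instances of the proposition. Since every later step rests on the identity $\cL^\circ(M_K) \cup h(\cL^\circ(M_r)) = \Q P^1$, which Conjecture~\ref{con:RR} only yields when both pieces are boundary incompressible, the gap is load-bearing rather than cosmetic.

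The paper closes this gap with Scharlemann's theorem \cite{Scharlemann} (extending Gabai \cite{Gabai1990}): $M_r$ is either (i) a solid torus, in which case $P$ is a 0- or 1-bridge braid and the proposition follows from the known results on cables \cite{HeddencablingII, HomLspace} and Berge--Gabai satellites \cite{HLV}; (ii) homeomorphic to $W \# L(p,q)$, with $P$ a cable and $r$ the cabling slope, which is avoided by taking $r$ larger than the cabling slope; or (iii) irreducible and boundary irreducible, where Conjecture~\ref{con:RR} legitimately applies. You would need to add this trichotomy, including the appeal to prior work in case (i), which cannot be replaced by a purely formal fix. Apart from this, your argument is correct and parallels the paper's: the identification of the rational longitude $w^2/r$, the reformulation of the gluing condition as $J_r \subseteq (0, \frac{1}{2g-1})$, the extraction of $\alpha_r = \inf J_r > 0$ (the paper's $\varep$), and the translation of fillings of $M_r$ into surgeries on twisted patterns all match. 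Indeed, your rational-longitude argument that $K$ is an L-space knot (forcing $r/w^2 \in \cL^\circ(M_K)$ and letting $r \to \infty$) is cleaner than the paper's route, which instead combines Theorem~\ref{thm:RR} with strong quasipositivity and fiberedness of L-space knots to rule out $-K$.
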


\begin{proof}
Throughout this proof, we assume that Conjecture \ref{con:RR} is true and that $P(K)$ is an L-space knot.

As above, rather than $P \subset D^2 \times S^1$, we consider the link $P \cup J \subset S^3$ where $J$ is unknotted. Then $P(K)$ is the image of $P$ in $S^3 \setminus N(J) \cup_h S^3 \setminus N(K)$, where $h$ identifies the longitude (respectively meridian) of $J$ with the meridian (respectively longitude) of $K$. As above, let $M_K = S^3 \setminus N(K)$ and $M_r = S^3_r(P) \setminus N(\tilde{J})$. We identify slopes on $\d M_r$ with $\Q \cup \{\frac{1}{0}\}$ so that filling $M_r$ along slope $s$ yields $S^3_{r,s}(P \cup J)$, as in Remark \ref{rem:Jslopes}. 

Since $K$ is assumed to be nontrivial, $M_K$ is boundary incompressible. Scharlemann \cite{Scharlemann} (extending work of Gabai \cite{Gabai1990}; see \cite[Section 4.1]{BakerMotegi} for a summary of the results) shows that $M_r$ is either 
\begin{enumerate}
	\item \label{it:solidtorus} a solid torus (and so $P$ is a 0- or 1-bridge braid in the solid torus),
	\item \label{it:Lpq} homeomorphic to $W \# L(p, q)$, and $P$ is a $(p,q)$-cable knot and $r$ is the cabling slope,
	\item irreducible and boundary irreducible.
\end{enumerate}
In case \eqref{it:solidtorus}, the resulting satellite knots are either cables or Berge-Gabai satellite knots, and the proposition follows from \cite{HeddencablingII, HomLspace} and \cite{HLV} respectively. In what follows, we will consider $M_r$ for sufficiently large $r$, which, if we are in case \eqref{it:Lpq}, can be taken to be larger than the cabling slope. In particular, we may assume that $M_r$ is boundary incompressible.

Since $S^3_0(K)$ is not an L-space for any $K$, we have that $0 \notin \cL^\circ(M_K)$. By Conjecture \ref{con:RR}, it follows that for sufficiently large $r$, the slope $h^{-1}(0) = \infty \in \cL^\circ(M_r)$. Hence $S^3_{r, \infty}(P \cup J) = S^3_r(P(U))$ is an L-space and so $P(U)$ is an L-space knot.

If $K$ is neither an L-space knot nor a negative L-space knot, then $\cL^\circ(M_K)$ is empty. By Theorem \ref{thm:RR}, the set $\cL^\circ(M)$ can never be all of $\Q P^1$, so if $P(K)$ is an L-space knot, then either $K$ or $-K$ must be an L-space knot. Since L-space knots are strongly quasipositive \cite{Hedden2010} and fibered, and the fiber surface for $P(K)$ is obtained by taking $w(P)$ parallel copies of the fiber surface for $K$ together with the fiber surface for $P$ in the solid torus, it follows that $K$ (and not $-K$) must be an L-space knot.

For an L-space knot $K$, we have that $\cL^\circ(M_K) = (2g(K)-1, \infty)$. Fix $r$ sufficiently large. By Conjecture \ref{con:RR}, it follows that
\[ h^{-1}([-\infty, 2g(K)-1]) = [-\infty, 0] \cup [\frac{1}{2g(K)-1}, \infty] \] 
is contained in $\cL^\circ(M_r)$, i.e., there exists $\varep > 0$ such that $S^3_{r, s}(P \cup J)$ is an L-space for all $s \in [-\infty, \varep] \cup [\frac{1}{2g(K)-1}-\varep, \infty]$.  We proceed as in Lemma \ref{lem:main}, and consider $\tilde{P}_a \subset S^3_{1/a}(J)$, the image of $P$ in $\frac{1}{a}$ surgery along $J$, for $a \in \Z$. Note that $S^3_{1/a}(J) \cong S^3$ and $\tilde{P}_a$ is isotopic to $P(U, -a)$. We have that
\[ S^3_{r, 1/a}(P \cup J) \cong S^3_{r-aw^2}(\tilde{P}_a), \]
where $w=w(P)$. For $\frac{1}{a} \in [-\infty, \varep] \cup [\frac{1}{2g(K)-1}-\varep, \infty]$, the manifold $S^3_{r-aw^2}(\tilde{P}_a)$ must be an L-space. Equivalently, for $a \leq 2g(K)-1$ or $a \geq \frac{1}{\varep}$, the manifold $S^3_{r-aw^2}(P(U, -a))$ is an L-space. In the former case, for $r$ sufficiently large, $r-aw^2$ is positive, and so $P(U, -a)$ is an L-space knot. In the latter case, if $a \geq \frac{r}{w^2}$, then $r-aw^2$ is negative, and so $P(U, -a)$ is a negative L-space knot.
\end{proof}

\bibliographystyle{amsalpha}
\bibliography{references}

\providecommand{\bysame}{\leavevmode\hbox to3em{\hrulefill}\thinspace}
\providecommand{\MR}{\relax\ifhmode\unskip\space\fi MR }
\providecommand{\MRhref}[2]{%
  \href{http://www.ams.org/mathscinet-getitem?mr=#1}{#2}
}
\providecommand{\href}[2]{#2}
\begin{thebibliography}{HRRW15}

\bibitem[BC14]{BoyerClay}
Steven Boyer and Adam Clay, \emph{Foliations, orders, representations,
  {L}-spaces and graph manifolds}, 2014, Preprint, arXiv:1401.7726.

\bibitem[Ber91]{Berge}
John Berge, \emph{The knots in {$D^2\times S^1$} which have nontrivial {D}ehn
  surgeries that yield {$D^2\times S^1$}}, Topology Appl. \textbf{38} (1991),
  no.~1, 1--19.

\bibitem[BM14]{BakerMoore}
Kenneth~L. Baker and Allison Moore, \emph{Montesinos knots, {H}opf plumbings,
  and {L}-space surgeries}, 2014, Preprint, arXiv:1404.7585.

\bibitem[BM15]{BakerMotegi}
Kenneth~L. Baker and Kimihiko Motegi, \emph{Twist families of {L}-space knots,
  their genera, and {S}eifert surgeries}, 2015, Preprint, arXiv:1506.04455.

\bibitem[Gab90]{Gabai1990}
David Gabai, \emph{{$1$}-bridge braids in solid tori}, Topology Appl.
  \textbf{37} (1990), no.~3, 221--235.

\bibitem[GLV16]{GLV}
Joshua Greene, Sam Lewallen, and Faramarz Vafaee, \emph{(1, 1) {L}-space
  knots}, 2016, In preparation.

\bibitem[Han15]{Hanselman}
Jonathan Hanselman, \emph{Splicing integer framed knot complements and bordered
  {H}eegaard {F}loer homology}, 2015, Preprint, arXiv:1409.1912.

\bibitem[Hed09]{HeddencablingII}
Matthew Hedden, \emph{On knot {F}loer homology and cabling {II}}, Int. Math.
  Res. Not. IMRN (2009), no.~12, 2248--2274.

\bibitem[Hed10]{Hedden2010}
\bysame, \emph{Notions of positivity and the {O}zsv\'ath-{S}zab\'o concordance
  invariant}, J. Knot Theory Ramifications \textbf{19} (2010), no.~5, 617--629.

\bibitem[HLV14]{HLV}
Jennifer Hom, Tye Lidman, and Faramarz Vafaee, \emph{Berge-{G}abai knots and
  {L}-space satellite operations}, Algebr. Geom. Topol. \textbf{14} (2014),
  no.~6, 3745--3763.

\bibitem[HMS08]{HMS}
Mikami Hirasawa, Kunio Murasugi, and Daniel~S. Silver, \emph{When does a
  satellite knot fiber?}, Hiroshima Math. J. \textbf{38} (2008), no.~3,
  411--423.

\bibitem[Hom11]{HomLspace}
Jennifer Hom, \emph{A note on cabling and {$L$}-space surgeries}, Algebr. Geom.
  Topol. \textbf{11} (2011), no.~1, 219--223.

\bibitem[HRRW15]{HRRW}
Jonathan Hanselman, Jacob Rasmussen, Sarah~Dean Rasmussen, and Liam Watson,
  \emph{Taut foliations on graph manifolds}, 2015, Preprint, arXiv:1508.05911.

\bibitem[Mot14]{Motegi}
Kimihiko Motegi, \emph{L-space surgery and twisting operation}, 2014, Preprint,
  arXiv:1405.6487.

\bibitem[Ni07]{Ni}
Yi~Ni, \emph{Knot {F}loer homology detects fibred knots}, Invent. Math.
  \textbf{170} (2007), no.~3, 577--608.

\bibitem[OS05]{OSlens}
Peter Ozsv{\'a}th and Zolt{\'a}n Szab{\'o}, \emph{On knot {F}loer homology and
  lens space surgeries}, Topology \textbf{44} (2005), no.~6, 1281--1300.

\bibitem[OS11]{OSrational}
\bysame, \emph{Knot {F}loer homology and rational surgeries}, Algebr. Geom.
  Topol. \textbf{11} (2011), no.~1, 1--68.

\bibitem[RR15]{RR}
Jacob Rasmussen and Sarah~Dean Rasmussen, \emph{Floer simple manifolds and
  {L}-space intervals}, 2015, Preprint, arXiv:1508.05900.

\bibitem[Sch90]{Scharlemann}
Martin Scharlemann, \emph{Producing reducible {$3$}-manifolds by surgery on a
  knot}, Topology \textbf{29} (1990), no.~4, 481--500.

\end{thebibliography}

\end{document}